\theoremstyle{thmstyletwo}%
\newtheorem{theorem}{Theorem}
\newtheorem{proposition}[theorem]{Proposition}%
\newtheorem{example}{Example}%
\newtheorem{remark}{Remark}%
\newtheorem{definition}{Definition}
\numberwithin{equation}{section}
\newtheorem{lemma}{Lemma}
\begin{document}

\DOI{}
\copyrightyear{2023}
\vol{}
\pubyear{}
\access{Advance Access Publication Date: Day Month Year}
\appnotes{Paper}
\copyrightstatement{}
\firstpage{1}


\title[Feasible Conjugate Gradient Method]{A Feasible Conjugate Gradient Method for Calculating $\mathcal B$-Eigenpairs of Symmetric Tensors}

\author{Jiefeng Xu
	\address{
		\orgdiv{School of Mathematical Sciences}, \orgname{South China Normal University}, \orgaddress{ \street{Guangzhou}, \state{Guangdong}, \postcode{510631}, \country{People's Republic of China}}
	}
}

\author{Can Li*
	\address{
		\orgdiv{School of Mathematics and Statistics}, \orgname{Honghe University}, \orgaddress{ \street{Mengzi}, \state{Yunnan}, \postcode{661199}, \country{People's Republic of China}}\\
		\orgdiv{School of Mathematical Sciences}, \orgname{South China Normal University}, \orgaddress{ \street{Guangzhou}, \state{Guangdong}, \postcode{510631}, \country{People's Republic of China}}
	}
}

\author{Dong-Hui Li
	\address{
		\orgdiv{School of Mathematical Sciences}, \orgname{South China Normal University}, \orgaddress{ \street{Guangzhou}, \state{Guangdong}, \postcode{510631}, \country{People's Republic of China}}
	}
}


\corresp[*]{Corresponding author: \href{lican@m.scnu.edu.cn}{lican@m.scnu.edu.cn}}

\received{Date}{0}{Year}
\revised{Date}{0}{Year}
\accepted{Date}{0}{Year}


\abstract{In this paper, we propose a feasible conjugate gradient (FCG) method for calculating ${\mathcal B}$-eigenpairs of a symmetric tensor ${\mathcal A}$.
	The method is an extension of the well-known conjugate gradient method for unconstrained optimization problems to some curve constrained optimization problems.
	The proposed FCG method can find a ${\mathcal B}$-eigenpair of a symmetric tensor ${\mathcal A}$ without the requirement that the orders of ${\mathcal A}$ and $\mathcal B$ are equal.
	We pay particular attention to the Polak-Rib\'ire-Polyak (PRP) type conjugate gradient method.
	We show that the FCG method with some Armijo-type line search is globally convergent.
	Our	numerical experiments indicate the promising performance of the proposed method.}
\keywords{Symmetric tensor; Tensor eigenvalues; Tensor eigenvectors; Feasible conjugate gradient method; Global convergence.}


\maketitle

\section{Introduction}
\setcounter{equation}{0}

Let $m$ and $n$ be positive integers and $\mathbb{R}$ be the real field.
An $m$th-order $n$-dimensional real tensor $\mathcal A$ is an array taking the form
\[
{\mathcal A}=(a_{i_1i_2\ldots i_m}),\quad a_{i_1i_2\ldots i_m}\in\mathbb{R}, \ \forall i_{j}\in [n], \ j\in [m],
\]
where $[n]:= \{1,2,\ldots,n\}$.
We use $\mathbb{R}^{[m,n]}$ to denote the set of all real tensors of order $m$ and dimension $n$.
We simply denote $\mathbb{R}^{[1,n]}$ as  $\mathbb{R}^{n}$.
For a tensor $\mathcal{A} \in \mathbb{R}^{[m,n]}$ and a vector $x \in \mathbb{R}^{n}$, we denote the homogenous polynomial
$$
\mathcal{A} x^m:=\sum_{i_{j}\in [n], j\in [m]} a_{i_1 \ldots i_m} x_{i_1} \ldots x_{i_m}.
$$
For a positive integer $k < m$, define $\mathcal{A} x^k$ to be the tensor in $\mathbb{R}^{[m-k,n]}$ with elements
$$
\left(\mathcal{A} x^k\right)_{i_1, \ldots, i_{m-k}}:=\sum_{j_{l}\in [n], l\in [k]} a_{i_1 \ldots i_{m-k} j_1 \ldots j_k} x_{j_1} \ldots x_{j_k} .
$$
Especially, $\mathcal{A} x^{m-1} \in  \mathbb{R}^{n}$ is a vector and ${\mathcal A} x^{m-2}\in \mathbb{R}^{n\times n}$ is a matrix.
The tensor $\mathcal{A}$ is symmetric if each entry $a_{i_1 \ldots i_m}$ is invariant concerning all permutations of $\left(i_1, \ldots, i_m\right)$.
Let $\mathbb{S}^{[m,n]}$ be the set of all symmetric tensors in $\mathbb{R}^{[m,n]}$.
It is well-known that if ${\mathcal A} \in {\mathbb S}^{[m,n]}$, the gradient of the homogenous function ${\mathcal A}x^m$ is $m{\mathcal A}x^{m-1}$
and the Hessian is $m(m-1){\mathcal A}x^{m-2}$.
A tensor $\mathcal{A} \in \mathbb{S}^{[m, n]}$ is called positive definite if
$$
\mathcal{A} x^m>0 \text { for all } x \in \mathbb{R}^n, x \neq 0 .
$$
Let $\mathbb{S}_{+}^{[m, n]}$ denote the space consisting of all positive definite tensors in $\mathbb{S}^{[m, n]}$.

Eigenvalues and eigenvectors of tensors were introduced by \citet{Qi05} and \citet{Lim05}, independently.
They have been found wide applications in magnetic resonance imaging, quantum physics, molecular conformation, independent component analysis, and so on \citep[see e.g.][]{QI08Deigen,Wei03PhysRev,Cardoso99, Lathauwer95HigherorderPM}.
Unlike the matrix case, there are various kinds of eigenvalues and eigenvectors to tensors.
The concept of the so called ${\mathcal B}$-eigenpairs (or generalized eigenpairs) of a symmetric tensor was first given by \citet{CHANG09Eigen}.
A more unified definition of ${\mathcal B}$-eigenpairs below was given by \citet{Cui14AllEig}.
\begin{definition}\label{def:B-eigen}
	\citep{Cui14AllEig} Let ${\mathcal A}\in\mathbb{S}^{[m,n]}$ and $\mathcal{B} \in\mathbb{S}^{[m^{\prime},n]}$ be two symmetric tensors.
	A number $\lambda \in \mathbb{R}$ is a $\mathcal{B}$-eigenvalue of $\mathcal{A}$ if there exists $x \in \mathbb{R}^n \setminus \{0\}$ such that
	\begin{equation}\label{pro:B-eigen}
		\begin{aligned}
			\mathcal{A} x^{m-1}=\lambda \mathcal{B} x^{m^{\prime}-1}, \quad \mathcal{B} x^{m^{\prime}}=1.
		\end{aligned}
	\end{equation}
	Such $x$ is called a $\mathcal{B}$-eigenvector associated with $\lambda$, and such $(x, \lambda)$ is called a $\mathcal{B}$-eigenpair of $\mathcal{A}$.
\end{definition}

Different choices of symmetric tensor ${\mathcal B}$ correspond to different tensor eigenvalue problems.
\begin{remark}
	When $m^{\prime}=m$ is even and $\mathcal{B}$ is the identity tensor (i.e., $\mathcal{B} x^m=x_1^m+\cdots+x_n^m$ ), the $\mathcal{B}$-eigenpair $(x, \lambda)$ is just the $\mathrm{H}$-eigenpair \citep{Qi05,Lim05} satisfying
	$$
	\mathcal{A} x^{m-1}=\lambda x^{[m-1]}, \quad x_1^m+\cdots+x_n^m=1,
	$$
	where $x^{[m-1]} \in \mathbb{R}^{n}$ with elements $\left(x^{[m-1]} \right)_{i} := x_{i}^{m-1}$ for all $i \in [n]$.
\end{remark}
\begin{remark}
	When $m^{\prime}=2$ and $\mathcal{B}$ is the identity tensor (i.e., $\mathcal{B} x^2=x_1^2+\cdots+x_n^2$ ), the $\mathcal{B}$-eigenpair $(x, \lambda)$ is just the Z-eigenpair \citep{Qi05,Lim05} satisfying
	$$
	\mathcal{A} x^{m-1}=\lambda x, \quad x_1^2+\cdots+x_n^2=1.
	$$
\end{remark}
\begin{remark}
	When $m^{\prime}=2$ and $\mathcal{B}$ is the symmetric tensor such that $\mathcal{B} x^2=x^{\top} D x$, where  $D \in \mathbb{R}^{n \times n}$ is a given symmetric positive definite matrix, the $\mathcal{B}$-eigenpair $(x, \lambda)$ is just the D-eigenpair \citep{QI08Deigen} satisfying
	$$
	\mathcal{A} x^{m-1}=\lambda D x, \quad x^{\top} D x=1.
	$$
\end{remark}

There are a lot of works involved in calculating different eigenvalues of tensors.
For calculating Z-eigenvalues of symmetric tensors,
\citet{Qi09Direct} proposed a direct method that can find all Z-eigenvalues of small scale symmetric tensors;
\citet{Kolda11SSHOPM} proposed a shifted power method (SS-HOPM) with convergent guarantee, which is a generalization of the symmetric higher order power method \citep{Kofidis02};
\citet{Hao15TR, Hao15SSPrjM} proposed a feasible trust region algorithm  and a sequential subspace projection method;
\citet{Hu13SSPM} proposed a sequential semidefinite programming method for finding the extreme Z-eigenvalues of even order symmetric tensors.
\citet{Jaffe18NCM} presented a Newton correction method with a locally quadratic convergence;
\citet{Zhao20LCCA} proposed a modified normalized Newton method with  a local and cubical convergence;
\citet{Xu23FNM} proposed a feasible Newton method enjoying a globally quadratic convergence.
Besides, for Z-eigenvalue problems without the symmetric condition of tensors, 
\citet{Benson19Dynamic} presented a dynamical system method and 
\citet{Cui22RGNN} proposed a Rayleigh quotient-gradient neural network model.

When the order $m = m^{\prime}$ is even, the ${\mathcal B}$-eigenvalue problem can be reformulated to a spherical constrained optimization problem \citep{Chen16Comp}.
Based on this reformulation, there are extensive classical optimization methods have been introduced to the  ${\mathcal B}$-eigenvalue problem, such as adaptive shifted power method \citep{Kolda14Adaptive},
inexact steepest descent method \citep{Chen16Comp}, limited memory BFGS algorithm \citep{Chang16SpaHyp}, conjugate gradient methods \citep{Liu19CGLS,Wen22,Zhang23grad},
adaptive cubic regularization method \citep{Chang23ACR}.
For the ${\mathcal B}$-eigenvalue problem (\ref{pro:B-eigen}) of general symmetric tensors, 
\citet{Cui14AllEig} proposed a Jacobian semidefinite relaxation method, which can compute all real ${\mathcal B}$-eigenvalues; 
\citet{Cao19ASTR, Cao20SMBFGS} proposed a feasible self-adaptive trust region method and a subspace modified BFGS method.
What's more, to find all $\mathcal B$-eigenvalues of general tensors, 
\citet{Chen16HM} proposed two homotopy continuation type algorithms, which is effective only for small dimension.
They further proposed a linear homotopy method \citep{Chen17LHM}, which is easy to implement and more effective.
Besides, there also have many algorithms for computing nonnegative eigenvalues of tensors, such as \citet{Ng09NQZ,Ni15QCA,Yang18CubM,Kuo18CMN,Guo19MNI} and the references therein.

In this paper, inspired by the feasible methods proposed by \citet{Hao15TR,Cao19ASTR} and \citet{Xu23FNM}, we propose a feasible conjugate gradient (FCG) method for solving the ${\mathcal B}$-eigenvalue problem (\ref{pro:B-eigen}).
We first reformulate the ${\mathcal B}$-eigenvalue problem to an equality constrained optimization problem.
Then a feasible descent framework is derived by introducing a curve search technique.
The key point of the feasible descent framework is to determine a proper feasible descent direction.
The proposed FCG method determines the feasible descent direction in a way similar to the modified Polak-Rib\'ire-Polyak (PRP) conjugate gradient method \citep{Zhang06MPRP}
for unconstrained optimization problems.
The direction generated by the FCG method is always a feasible descent direction on the feasible curve.
To improve the efficiency of the curve search, we give a simple but useful initial step-length estimate.
Both sequences of the eigenvector estimates and eigenvalue estimates generated by the FCG method are feasible and non-increasing.
We show that the FCG method is globally convergent without any extra requirements.

The rest of this article is organized as follows.
In the next section, we present the feasible conjugate gradient method.
We establish the global convergence of the FCG method in \Cref{sec: conv ana}.
Numerical experiments are presented in \Cref{sec: numer}.
Conclusions are given in the last section.

\section{A Feasible Conjugate Gradient Method}
\setcounter{equation}{0}

\label{sec: fcg}
Consider the following constrained optimization problem
\begin{equation}\label{opt:B-eig}
	\begin{aligned}
		\mathop{\min}_{x\in \mathbb{R}^{n}} f(x): = \frac{1}{m}\mathcal{A} x^{m} \quad \mbox{s.t.} \quad \mathcal{B} x^{m^{\prime}}=1,
	\end{aligned}
\end{equation}
where $\mathcal{A}\in \mathbb{S}^{[m, n]}$, $\mathcal{B} \in \mathbb{S}_{+}^{[m^{\prime}, n]}$ and $m^\prime$ is even.
We denote the feasible set of the problem (\ref{opt:B-eig}) to be
\[
\mathbb{B}:=\{x\in \mathbb{R}^{n}\mid \mathcal{B} x^{m^{\prime}}=1\}.
\]
Since the symmetric tensor $\mathcal{B}$ is positive definite, the feasible set ${\mathbb B}$ is bounded and
the problem (\ref{opt:B-eig}) always has a solution.
Hence $(x,\lambda)$ with $\lambda = \mathcal{A} x^{m}$ is a $\mathcal{B}$-eigenpair of ${\mathcal A}$ if and only if $x$ is a
KKT point of the problem (\ref{opt:B-eig}) and $\lambda$ is its corresponding Lagrangian multiplier.

For a given $(x,d)\in \mathbb B\times \mathbb{R}^n$, we define  a mapping $x(\alpha): \mathbb {R}\to \mathbb {R}^n$ as follows
\begin{equation}
	\label{def:x(alpha d)}
	x (\alpha) = \frac{x + \alpha d}{\|x + \alpha d\|_{\mathcal B}},
\end{equation}
where, for $y\in\mathbb {R}^n$, $\|y\|_{\mathcal B} := \sqrt[m^{\prime}]{{\mathcal B}y^{m^{\prime}}}$.
Throughout this paper, without specification, $\|\cdot\|$ stands for the $l_2$-norm of vectors in $\mathbb{R}^{n}$.
It is easy to see  that $x (\alpha) \in \mathbb{B}$, $\forall \alpha \in \mathbb{R}, x\in\mathbb {B},  d\in \mathbb {R}^n$
as long as $x + \alpha d \neq 0$.
By a simple calculation, we can obtain
\begin{equation}\label{eq: first oder x(alpha)}
	x (\alpha) = x + \alpha\left(I - x \cdot({\mathcal B}x^{m^\prime-1})^{\top}\right)d + o(\|\alpha d\|),\quad
	\forall \alpha \in \mathbb{R}, x\in\mathbb {B},  d\in \mathbb {R}^n
\end{equation}
and
$$\left(I - x\cdot ({\mathcal B}x^{m^{\prime}-1})^{\top}\right)^2 = I - x\cdot ({\mathcal B}x^{m^{\prime}-1})^{\top},\quad
\forall x\in\mathbb {B}.$$
In other words, the matrix $ I - x ({\mathcal B}x^{m^{\prime}-1})^{\top}$ is idempotent.
The equality (\ref{eq: first oder x(alpha)}) implies
\begin{equation}\label{eq:x alpha bdd}
	\begin{split}
		\|x(\alpha) - x\| &\le  \left\| \alpha\left(I - x \left({\mathcal B}x^{m^{\prime}-1}\right)^{\top}\right)d\right\| + o(\|\alpha d\|)\\
		& \le \|\alpha d\| + o(\|\alpha d\|).
	\end{split}
\end{equation}
We define for $x\in \mathbb{R}^{n}$ that
\begin{equation}\label{def:h(x)}
	h(x) := \frac{1}{m} {\mathcal A}\left(\frac{x}{\|x\|_{\mathcal B}}  \right)^{m} = \frac{1}{m}   \frac{{\mathcal A}x^{m} }{\|x\|_{\mathcal B}^{m}}
\end{equation}
and  $\phi(x) := {\mathcal B}x^{m^{\prime}}$.
By direct calculation, the gradient of function $h(x)$ is
\begin{equation}\label{grad h(x)}
	\begin{split}
		\nabla h(x) &=  \|x\|_{\mathcal B}^{-m} \left( {{\mathcal A}x^{m-1} - \phi(x)^{- 1} {\mathcal A}x^{m}  {\mathcal B}x^{m^\prime - 1} } \right)
	\end{split}
\end{equation}
and its Hessian satisfies that
\begin{equation*}
	\begin{split}
		\|x\|_{\mathcal B}^{m} \cdot \nabla^{2} h({x}) & = (m-1)  {\mathcal A}x^{m-2} - (m^\prime -1) \phi(x)^{-1} {\mathcal A}x^{m} {\mathcal B}x^{m^\prime - 2} \\
		&\quad  -m \phi(x)^{-1} \left( {{\mathcal A}x^{m-1} - \phi(x)^{- 1} {\mathcal A}x^{m}  {\mathcal B}x^{m^\prime - 1} } \right) \odot {\mathcal B}x^{m^\prime - 1} \\
		& \quad - (m - m^\prime)  \phi(x)^{-2} {\mathcal A}x^{m} {\mathcal B}x^{m^\prime - 1} \left({\mathcal B}x^{m^\prime - 1}\right)^{\top},
	\end{split}
\end{equation*}
where ${x} \odot {y} \equiv {x} {y}^{\top}+{y} {x}^{\top}$.
If $x \in \mathbb{B}$, the gradient and Hessian of $h(x)$ can be simplified respectively as
\begin{equation}
	\label{F(x) x in B}
	F(x) = {\mathcal A}x^{m-1} - {\mathcal A}x^{m} {\mathcal B}x^{m^{\prime}-1}
\end{equation}
and
\begin{equation}\label{H(x)}
	\begin{split}
		H(x) &= (m-1){\mathcal A}x^{m-2} - (m^\prime -1){\mathcal A}x^{m} {\mathcal B}x^{m^\prime - 2}  \\
		& -m F(x) \odot {\mathcal B}x^{m^\prime-1} - (m - m^\prime) {\mathcal A}x^{m}  {\mathcal B}x^{m^\prime - 1} \left({\mathcal B}x^{m^\prime - 1}\right)^{\top}.
	\end{split}
\end{equation}
For a given feasible point $x\in \mathbb{B}$, we have $f(x(\alpha)) = h(x+\alpha d)$ and $f(x) = h(x)$ by (\ref{def:x(alpha d)}) and (\ref{def:h(x)}).
In this case, we can get by the mean value theorem
\begin{equation}\label{eq: f second}
	f(x(\alpha))  = f(x) + \alpha F(x)^{\top}d + \frac{1}{2} \alpha^2 d^{\top} H(x) d + o(\alpha^2 \|d\|^2).
\end{equation}

The following proposition can be easily obtained from (\ref{F(x) x in B}) and the last equation.
\begin{proposition}\label{prp: desc}
	Let $x \in \mathbb{B}$ and $d\in \mathbb{R}^n$, and $F$ be defined by (\ref{F(x) x in B}).
	The following conclusions are true.
	\begin{itemize}
		\item $F(x) = 0$ if and only if $x$ is a critical point of the problem (\ref{opt:B-eig}), or equivalently $(x, \lambda)$ is a ${\mathcal B}$-eigenpair of ${\mathcal A}$ with $\lambda = {\mathcal A}x^{m}$ .
		\item If the direction $d$ satisfies $F(x)^\top d<0$, then for any constant $\sigma \in(0,1)$, the inequality
		$$
		f(x(\alpha)) \leq f(x)+\alpha \sigma F(x)^{\top}d
		$$
		holds for all $\alpha>0$ sufficiently small.
		In this case, $d$ is called a feasible descent direction of $f$ at $x$.
	\end{itemize}
	Typically, $d=-F(x)$ is a feasible descent direction of $f$ at $x$.
\end{proposition}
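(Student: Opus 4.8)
The plan is to treat the two bullets separately, each by direct reduction to the definitions, and then to read off the concluding remark.

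For the first bullet, I would start from the explicit form $F(x) = \mathcal{A}x^{m-1} - (\mathcal{A}x^{m})\,\mathcal{B}x^{m^{\prime}-1}$ in (\ref{F(x) x in B}), valid for $x \in \mathbb{B}$, and exploit that $\phi(x) = \mathcal{B}x^{m^{\prime}} = 1$ there. First I would observe that contracting the constraint gradient with $x$ gives $x^{\top}\bigl(m^{\prime}\mathcal{B}x^{m^{\prime}-1}\bigr) = m^{\prime}\mathcal{B}x^{m^{\prime}} = m^{\prime} > 0$, so $\nabla\phi(x) \neq 0$ on $\mathbb{B}$; the feasible set is a smooth manifold and the KKT relation $\mathcal{A}x^{m-1} = \mu\, m^{\prime}\,\mathcal{B}x^{m^{\prime}-1}$ is the correct notion of criticality. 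Setting $\lambda := \mathcal{A}x^{m}$, I would note that $F(x) = 0$ is literally $\mathcal{A}x^{m-1} = \lambda\,\mathcal{B}x^{m^{\prime}-1}$, which together with $\mathcal{B}x^{m^{\prime}} = 1$ is exactly (\ref{pro:B-eigen}). Conversely, contracting any eigenpair relation $\mathcal{A}x^{m-1} = \lambda\,\mathcal{B}x^{m^{\prime}-1}$ with $x^{\top}$ yields $\mathcal{A}x^{m} = \lambda\,\mathcal{B}x^{m^{\prime}} = \lambda$, forcing $\lambda = \mathcal{A}x^{m}$ and hence $F(x) = 0$; the same contraction identifies the multiplier $\mu = (\mathcal{A}x^{m})/m^{\prime}$. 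Thus $F(x) = 0$, criticality of (\ref{opt:B-eig}), and the $\mathcal{B}$-eigenpair property are mutually equivalent.

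For the second bullet, I would rely on the expansion (\ref{eq: f second}). Subtracting $f(x) + \alpha\sigma F(x)^{\top}d$ from both sides gives $f(x(\alpha)) - f(x) - \alpha\sigma F(x)^{\top}d = \alpha(1-\sigma)F(x)^{\top}d + \frac{1}{2}\alpha^{2} d^{\top} H(x)d + o(\alpha^{2}\|d\|^{2})$. Dividing by $\alpha > 0$ and letting $\alpha \to 0^{+}$, the right-hand side tends to $(1-\sigma)F(x)^{\top}d$, which is strictly negative because $\sigma \in (0,1)$ and $F(x)^{\top}d < 0$. Hence the quotient is negative for all sufficiently small $\alpha > 0$, which is precisely the claimed inequality; feasibility $x(\alpha) \in \mathbb{B}$ is automatic from (\ref{def:x(alpha d)}).

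Finally, the concluding remark follows by taking $d = -F(x)$: then $F(x)^{\top}d = -\|F(x)\|^{2} < 0$ whenever $F(x) \neq 0$, i.e.\ whenever $x$ is not already a $\mathcal{B}$-eigenvector, so the second bullet applies.

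I do not anticipate a genuine obstacle here; the only point requiring care is the equivalence in the first bullet, specifically the bookkeeping of the Lagrange multiplier. One must use the homogeneity of the constraint (via the contraction with $x$) to pin down $\lambda = \mathcal{A}x^{m}$ rather than an undetermined multiplier, and to confirm the constraint qualification so that \emph{critical point} and \emph{KKT point} coincide. The descent estimate is then a standard first-order Armijo argument once (\ref{eq: f second}) is in hand.
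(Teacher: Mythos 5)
Your proof is correct and follows essentially the same route as the paper, which simply notes that the proposition "can be easily obtained" from the expression (\ref{F(x) x in B}) for $F$ and the expansion (\ref{eq: f second}); your contraction-with-$x$ argument to pin down $\lambda=\mathcal{A}x^{m}$, the constraint-qualification check, and the first-order Armijo estimate are exactly the details the paper leaves implicit.
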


The following feasible descent framework provides a way to generate a feasible eigenvector sequence such that its corresponding eigenvalue sequence is non-increasing.
\begin{algorithm}[!htbp]
	\caption{(A feasible descent framework).}\label{alg fdm}
	\begin{algorithmic}[1]
		\State Given an initial point $x_0\in \mathbb{B}$. Let $k:=0$, $\lambda_0={\mathcal A}x_0^m$.
		\While {$F(x_k)\neq 0$}
		\State Determine a direction $d_{k}$ such that $d_{k}^{\top}F(x_{k}) < 0$.
		\State Let $x_k(\alpha)$ be defined by (\ref{def:x(alpha d)}) with $x=x_k$ and $d=d_k$.
		\State Find $\alpha_{k} \in (0, 1]$ such that $f(x_{k}(\alpha_{k})) < f(x_{k})$.
		\State Let $x_{k+1}=x_k(\alpha_k)$ and $\lambda_{k+1}={\mathcal A}x_{k+1}^m$. Let $k:=k+1$.
		\EndWhile
	\end{algorithmic}
\end{algorithm}

In the following, we derive a feasible conjugate gradient method, which we simplify as the FCG method. It can be regarded
as an extension of the modified PRP CG method \citep{Zhang06MPRP} for unconstrained optimization problems.

Given $x_{0}\in \mathbb{B}$.
At iteration $k\ge 0$, suppose that we have already had a feasible point $x_k\in \mathbb B$.
We define the curve  search direction to be
\begin{equation}\label{eq: fcg}
	d_{k} = \left\{\begin{array}{ll}
		-F(x_0), & \text{if}\ k=0, \\
		- F(x_{k}) + \beta_{k-1} d_{k-1} - \theta_{k-1} y_{k-1}, & \text{if}\ k>0,
	\end{array}\right.
\end{equation}
where $y_{k-1} = F(x_{k}) -  F(x_{k-1})$,
\begin{equation}\label{beta,theta}
	\beta_{k-1}= \frac{ F\left(x_k\right)^{\top} y_{k-1} }{\left\|F\left(x_{k-1}\right)\right\|^2}
	\quad \text{and} \quad
	\theta_{k-1} = \frac{F(x_{k})^{\top}d_{k-1}  }{\left\|F\left(x_{k-1}\right)\right\|^2}.
\end{equation}
It follows from (\ref{eq: fcg}) and (\ref{beta,theta}) that
\begin{equation}\label{eq:Fd}
	d_{k}^\top F(x_{k}) = - \|F(x_{k})\|^2\le 0.
\end{equation}
This implies that if the feasible point $x_{k}$ is not a critical point of the problem (\ref{opt:B-eig}), then $d_{k}$ determined by (\ref{eq: fcg})
provides a feasible descent direction of $f$ at $x_k$.
Therefore, we can employ an Armijio-type line search technique to determine $\alpha_{k}>0$  satisfying
\begin{equation}\label{armijio}
	f(x_{k}(\alpha_k )) \le f(x_{k}) +  \sigma_{1} \alpha_k F(x_{k})^{\top}d_{k} -\sigma_{2} \alpha_k^2\|d_{k}\|^2,
\end{equation}
where $\sigma_{1}\in (0, 1)$ and $\sigma_{2}>0$ are given constants.
And let the next iteration be $x_{k+1} = x_{k}(\alpha_{k})$.

We summarize the above process in \Cref{alg1} below.

\begin{algorithm}[!htbp]
	\caption{(A feasible conjugate gradient (FCG) method).}\label{alg1}
	\begin{algorithmic}[1]
		\State Given constants $\rho\in (0,1)$, $\sigma_{1},\sigma_2>0$ and $\delta>0$. Given an initial point $u_0\in \mathbb{R}^{n}\setminus \{0\}$.
		\State Let $k:=0$, $x_{0} = u_{0}/\|u_{0}\|_{{\mathcal B}}$, $\lambda_0={\mathcal A}x_0^m$ and $d_{0} = -F(x_{0})$.
		\While{$F(x_k)\neq 0$}
		\State Compute  $d_k$  by (\ref{eq: fcg}) and  $x_k(\alpha)$  by (\ref{def:x(alpha d)}) with $x=x_k$ and $d=d_k$, respectively.
		\State Determine the steplength $\alpha_k= \max\{\delta \rho^{i}\mid i=0, 1,2, \ldots\}$ satisfying (\ref{armijio}).
		\State Let $x_{k+1}= x_{k}(\alpha_{k})$ and $\lambda_{k+1}={\mathcal A}x_{k+1}^m$. Let $k:=k+1$.
		\EndWhile
	\end{algorithmic}
\end{algorithm}

The FCG method enjoys many nice properties similar to the modified PRP method  \citep{Zhang06MPRP} for unconstrained optimization problems.

\begin{remark}
	The generated eigenvector sequence $\{x_{k}\}$ is feasible and the eigenvalue sequence $\{\lambda_{k}\}$ is non-increasing.
\end{remark}

\begin{remark}\label{remark 1}
	It follows from the line search condition (\ref{armijio}) that the function value sequence $\{f(x_k)\}$ is decreasing and
	$$
	\sum\limits_{k=0}^\infty\alpha_k^2\|d_k\|^2 < \infty,
	$$
	due to the boundedness of $f(x)$ on $x \in \mathbb{B}$.
	It particularly implies $\lim_{k\rightarrow \infty} \alpha_k d_{k} =0$.
\end{remark}

\begin{remark}
	\label{rm 2} Instead of step 5, we can also consider using an exact line search in \Cref{alg1} to get an exact steplength. It means that the steplength
	$\alpha_k=\alpha_{k}^{*}$ is the exact solution of the following one dimensional optimization problem
	\begin{equation}	\label{exc-sub-pro}
		\min_{\alpha\in \mathbb{R}} \phi_{k}(\alpha) := f(x_{k}(\alpha)) = h(x_{k}+\alpha d_{k}).
	\end{equation}
	Denote $x_{k+1}^*=x_{k}(\alpha_k^*)$. It is easy to get   from (\ref{grad h(x)}) that
	\begin{equation}\label{eq Fd=0}
		d_{k}^{\top} F(x_{k+1}^*) = d_{k}^{\top}\nabla h\left(\frac{x_k+\alpha_k^* d_{k}}{\|x_k+\alpha_k^* d_{k}\|_{\mathcal B}}\right)
		=\|x_k+\alpha_k^* d_{k}\|_{\mathcal B} \cdot d_{k}^{\top} \nabla h( x_k+\alpha_k^* d_{k} ) = 0.
	\end{equation}
	If the exact line search (\ref{exc-sub-pro}) is used, the feasible gradient $F(x_{k+1}^{*})$ of $f$ at $x_{k+1}^*$ is orthogonal to the direction $d_{k}$.
	Hence we can see from (\ref{beta,theta}) that $\theta_{k} = 0$.
	Consequently, the direction $d_{k}$ generated by (\ref{eq: fcg}) reduces to the standard PRP CG direction.
\end{remark}

\begin{remark}\label{rm initial step lenth}
	To improve the efficiency of the line search, a proper initial steplength estimate can be obtained by solving
	\begin{equation}
		\begin{split}
			\min_{\alpha \in \mathbb{R}} & f(x_{k}) + \alpha F(x_k)^{\top}d_{k} + \frac{1}{2}\alpha^2 d_{k}^{\top} H(x_{k}) d_{k}.
		\end{split}
	\end{equation}
	That is to say,  we can replace the constant $\delta$ in step 5 of Algorithm \Cref{alg1} by
	\begin{equation}
		\label{delta_k}
		\delta_{k} = \left| \frac{  F(x_k)^\top d_{k}}{d_{k}^{\top} H(x_{k}) d_{k}} \right|.
	\end{equation}
\end{remark}

\section{Convergence Analysis}
\label{sec: conv ana}
In this section, we prove the global convergence of the FCG method proposed in the last section.

It is clear that the vector valued function $F: \mathbb{R}^{n} \rightarrow \mathbb{R}^{n}$ defined by (\ref{F(x) x in B}) is continuously differentiable on $\mathbb{R}^n$.
Hence $F(x)$ has an upper bound on the closed bounded set $\mathbb{B}$, i.e., there exists a constant $M_{1}>0$ such that
\begin{equation}
	\label{bdd F}
	\|F(x)\| \le M_{1}, \quad \forall x \in \mathbb{B}.
\end{equation}
What's more, $F(x)$ is Lipschitz continuous on $x\in \mathbb{B}$, i.e., there exists a constant $L > 0$ such that
\begin{equation}
	\label{Lip F}
	\|F(x)-F(y)\|  \le L \| x- y\|, \quad \forall x,y \in \mathbb{B}.
\end{equation}

The following lemma indicates that the direction sequence $\{d_{k}\}$ is bounded if $\{\left\|F(x_{k})\right\|\}$ has a positive lower bound.
\begin{lemma}\label{lemma: d bdd}
	Let $\left\{x_k\right\}$ and $\left\{d_k\right\}$ be  generated by  \Cref{alg1}.
	If there exists a constant $\epsilon>0$ such that
	$$
	\left\|F(x_{k})\right\| \ge \epsilon, \quad \forall k\ge 0,
	$$
	then the direction sequence $\{d_{k}\}$ is bounded, i.e., there exists a constant $M_{2}>0$ such that
	\begin{equation}\label{upper bound of d}
		\|d_k\|\leq M_2, \quad \forall k\ge0.
	\end{equation}
\end{lemma}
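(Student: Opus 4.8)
The plan is to take norms in the defining recursion (\ref{eq: fcg}), estimate each of the three resulting terms, and then convert the bound into a contraction-type recursion for $\|d_k\|$. From (\ref{eq: fcg}) I get, for $k \ge 1$,
$$
\|d_k\| \le \|F(x_k)\| + |\beta_{k-1}|\,\|d_{k-1}\| + |\theta_{k-1}|\,\|y_{k-1}\|.
$$
Applying the Cauchy--Schwarz inequality in the definitions (\ref{beta,theta}), together with the upper bound (\ref{bdd F}) and the hypothesis $\|F(x_{k-1})\| \ge \epsilon$, yields $|\beta_{k-1}| \le M_1\|y_{k-1}\|/\epsilon^2$ and $|\theta_{k-1}| \le M_1\|d_{k-1}\|/\epsilon^2$. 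The Lipschitz estimate (\ref{Lip F}) gives $\|y_{k-1}\| = \|F(x_k) - F(x_{k-1})\| \le L\|x_k - x_{k-1}\|$. Substituting these bounds collapses the last two terms into a common form, producing
$$
\|d_k\| \le M_1 + \frac{2 M_1 L\,\|x_k - x_{k-1}\|}{\epsilon^2}\,\|d_{k-1}\|.
$$

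Next I would control the coefficient of $\|d_{k-1}\|$. By (\ref{eq:x alpha bdd}) applied with $x = x_{k-1}$, $d = d_{k-1}$, $\alpha = \alpha_{k-1}$, we have $\|x_k - x_{k-1}\| \le \|\alpha_{k-1} d_{k-1}\| + o(\|\alpha_{k-1} d_{k-1}\|)$, and Remark \ref{remark 1} guarantees $\alpha_{k-1} d_{k-1} \to 0$; hence $\|x_k - x_{k-1}\| \to 0$. Therefore the factor $c_k := 2 M_1 L\,\|x_k - x_{k-1}\|/\epsilon^2$ tends to zero, so there is an index $k_0$ with $c_k \le \frac{1}{2}$ for all $k \ge k_0$. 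For such $k$ the recursion reads $\|d_k\| \le M_1 + \frac{1}{2}\|d_{k-1}\|$.

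Finally I would close the argument by induction. Setting $M_2 := \max\{\,2M_1,\ \|d_0\|,\ \|d_1\|,\ \ldots,\ \|d_{k_0-1}\|\,\}$, the bound $\|d_k\| \le M_2$ holds for $k < k_0$ by construction, while for $k \ge k_0$ the relation $\|d_k\| \le M_1 + \frac{1}{2} M_2 \le M_2$ (using $M_2 \ge 2M_1$) propagates the bound; this establishes (\ref{upper bound of d}). The main obstacle is the middle step: it is precisely the combination of the Lipschitz estimate on $y_{k-1}$ with the summability relation of Remark \ref{remark 1} that forces the coefficient $c_k$ of $\|d_{k-1}\|$ to vanish. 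Without the modified term $-\theta_{k-1} y_{k-1}$ in (\ref{eq: fcg}) one could not arrange such a contraction factor, and the uniform boundedness would fail.
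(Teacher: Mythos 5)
Your proof is correct and takes essentially the same route as the paper's: you derive the identical recursion $\|d_k\| \le M_1 + \bigl(2M_1 L\|x_k-x_{k-1}\|/\epsilon^2\bigr)\|d_{k-1}\|$, invoke (\ref{eq:x alpha bdd}) and \Cref{remark 1} to drive the coefficient to zero, and then bound $\|d_k\|$ uniformly past some index $k_0$. The only difference is cosmetic: the paper fixes a ratio $r\in(0,1)$ and unrolls the recursion as a geometric series, whereas you fix $r=\tfrac12$ and close by induction using $M_2 \ge 2M_1$; both yield the same constant-type bound.
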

\begin{proof}
	By the definition of $d_k$, we have from (\ref{Lip F}) and (\ref{bdd F}) that
	\begin{equation}
		\label{ineq dk}
		\|d_k\| \le \|F(x_k)\|+\frac{2\|F(x_k)\|\cdot \|y_{k-1}\|}{\|F(x_{k-1})\|^2}\|d_{k-1}\|
		\leq M_{1}+2\frac{M_{1} L \|x_k-x_{k-1}\| }{\epsilon^2}\|d_{k-1}\|.
	\end{equation}
	By using (\ref{eq:x alpha bdd}), we obtain
	\begin{equation*}
		\begin{split}
			\|x_{k}-x_{k-1}\|  = \|x_{k-1}(\alpha_{k-1})-x_{k-1}\| \le \alpha_{k-1}\|d_{k-1}\| +o(\alpha_{k-1}\|d_{k-1}\|)
		\end{split}
	\end{equation*}
	Since $\lim\limits_{k\rightarrow \infty} \alpha_{k}\|d_{k}\| = 0$ by \Cref{remark 1},
	we have $\lim\limits_{k\rightarrow \infty} \|x_{k}-x_{k-1}\| = 0$. As a result,
	there exists a constant $r\in(0,1)$ and an integer $k_0$ such that the inequality
	$$
	2\frac{M_{1} L \|x_k-x_{k-1}\| }{\epsilon^2} \le r
	$$
	holds for all $k\geq k_0$.
	The last inequation and (\ref{ineq dk}) show that
	\begin{equation*}
		\begin{split}
			\|d_k\| &\leq M_1+r\|d_{k-1}\|\\
			&\leq M_1(1+r+\cdots+r^{k-k_0-1})+r^{k-k_0}\|d_{k_0}\|\\
			& \leq \frac{M_1}{1-r}+\|d_{k_0}\|.
		\end{split}
	\end{equation*}
	Let $M_2=\max\left\{\|d_0\|,\|d_1\|,\cdots,\|d_{k_0}\|,\frac{M_1}{1-r}+\|d_{k_0}\|\right\}$.
	Then we get (\ref{upper bound of d}).
\end{proof}
Now we are ready to establish the following global convergence theorem for \Cref{alg1}.
\begin{theorem}\label{thr: g c}
	Let $\left\{x_k\right\}$ and $\left\{d_k\right\}$ be generated by  \Cref{alg1}. Then we have
	\begin{equation}\label{global convergence}
		\liminf\limits_{k\to \infty}\|F(x_{k})\|=0.
	\end{equation}
\end{theorem}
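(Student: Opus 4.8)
The plan is to argue by contradiction. Suppose the conclusion \eqref{global convergence} fails; then there exist a constant $\epsilon>0$ and an index $K$ such that $\|F(x_k)\|\ge\epsilon$ for all $k\ge K$. Discarding the finitely many initial iterates, I may assume $\|F(x_k)\|\ge\epsilon$ for all $k\ge0$, so that the hypothesis of \Cref{lemma: d bdd} is met and the direction sequence is bounded, $\|d_k\|\le M_2$ for all $k$.

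Next I would draw two consequences from this lower bound. First, combining \eqref{eq:Fd} with the Cauchy--Schwarz inequality gives $\|F(x_k)\|^2=-d_k^\top F(x_k)\le\|d_k\|\,\|F(x_k)\|$, hence $\|d_k\|\ge\|F(x_k)\|\ge\epsilon$; the directions are uniformly bounded away from zero. Second, \Cref{remark 1} yields $\lim_{k\to\infty}\alpha_k\|d_k\|=0$, and dividing by the lower bound $\|d_k\|\ge\epsilon$ forces $\alpha_k\to0$. In particular $\alpha_k<\delta$ for all large $k$, so the backtracking rule in step 5 must have rejected the previous trial value $\tilde\alpha_k:=\alpha_k/\rho\to0$, that is, \eqref{armijio} fails at $\tilde\alpha_k$:
\[
f(x_k(\tilde\alpha_k))>f(x_k)+\sigma_1\tilde\alpha_k F(x_k)^\top d_k-\sigma_2\tilde\alpha_k^2\|d_k\|^2 .
\]

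The decisive step is to feed the second-order expansion \eqref{eq: f second} into this failed inequality. Replacing $f(x_k(\tilde\alpha_k))$ by $f(x_k)+\tilde\alpha_k F(x_k)^\top d_k+\tfrac12\tilde\alpha_k^2 d_k^\top H(x_k)d_k+o(\tilde\alpha_k^2\|d_k\|^2)$, cancelling $f(x_k)$, and dividing through by $\tilde\alpha_k>0$ leaves
\[
(1-\sigma_1)F(x_k)^\top d_k>-\tfrac12\tilde\alpha_k d_k^\top H(x_k)d_k-\sigma_2\tilde\alpha_k\|d_k\|^2-o(\tilde\alpha_k\|d_k\|^2).
\]
Here I would use that $H$ in \eqref{H(x)} is a polynomial in $x$, hence continuous and therefore bounded on the compact feasible set $\mathbb B$, say $\|H(x)\|\le M_3$; together with $\|d_k\|\le M_2$ this makes the whole right-hand side $O(\tilde\alpha_k)$, which tends to $0$ as $\tilde\alpha_k\to0$. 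On the left, \eqref{eq:Fd} and the standing assumption give $(1-\sigma_1)F(x_k)^\top d_k=-(1-\sigma_1)\|F(x_k)\|^2\le-(1-\sigma_1)\epsilon^2$. Passing to the limit yields $-(1-\sigma_1)\epsilon^2\ge0$, i.e. $(1-\sigma_1)\epsilon^2\le0$, contradicting $\sigma_1\in(0,1)$ and $\epsilon>0$; this contradiction establishes \eqref{global convergence}.

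The step I expect to be the main obstacle is the remainder control in the last display: one must confirm that the $o(\tilde\alpha_k^2\|d_k\|^2)$ term inherited from \eqref{eq: f second} genuinely becomes negligible after division by $\tilde\alpha_k$, which is precisely where the two boundedness facts — $\|d_k\|\le M_2$ from \Cref{lemma: d bdd} and the compactness bound $\|H(x)\|\le M_3$ on $\mathbb B$ — are indispensable. Everything else amounts to bookkeeping around the backtracking line search.
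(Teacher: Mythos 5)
Your proof is correct and follows essentially the same route as the paper's: argue by contradiction, invoke \Cref{lemma: d bdd} to bound $\{d_k\}$, observe that the rejected trial step $\alpha_k/\rho$ must violate \eqref{armijio}, and expand via \eqref{eq: f second} with a uniform bound on $H$ over $\mathbb{B}$ to reach a contradiction. The only difference is organizational: the paper splits into the cases $\liminf_{k\to\infty}\alpha_k>0$ and $\liminf_{k\to\infty}\alpha_k=0$ (working along a subsequence in the latter), whereas you eliminate the first case outright by noting that \eqref{eq:Fd} and Cauchy--Schwarz give $\|d_k\|\ge\|F(x_k)\|\ge\epsilon$, so $\alpha_k\to0$ for the whole sequence---a slight streamlining of the same argument.
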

\begin{proof}
	For the sake of contradiction, we suppose that the conclusion is not true, i.e., there exists a constant $\epsilon >0$ such that
	\begin{equation}\label{ass}
		\|F(x_{k})\|\geq \epsilon,\quad \forall k\ge 0.
	\end{equation}
	
	By \Cref{remark 1}, $\lim\limits_{k\to \infty} \alpha_{k}d_k=0$.
	We can obtain from (\ref{eq:Fd}) that
	$$
	\|F(x_{k})\|^2 = - F(x_{k})^\top d_{k} \le \|F(x_{k})\| \cdot \|d_{k}\|,
	$$
	that is $\|F(x_{k})\| \le \|d_{k}\|$.
	If $\liminf\limits_{k\to \infty}\alpha_k>0$, then $\lim\limits_{k\to \infty} \|F(x_{k})\| \le \lim\limits_{k\to \infty}\|d_k\|=0$, which contradicts to  (\ref{ass}).
	
	Suppose that $\liminf\limits_{k\to \infty}\alpha_k=0$.
	Then there is an infinite index set $K$ such that
	$$
	\lim\limits_{k\in K, k\to \infty}\alpha_k=0.
	$$
	According to the line search rule, when $k\in K$ is sufficiently large, $\rho^{-1}\alpha_k$ will  not satisfy (\ref{armijio}).
	That is,
	\begin{equation}\label{not satf line search}
		f\left(x_k(\rho^{-1}\alpha_k)\right)-f(x_k)>-\sigma_{1} \rho^{-1}\alpha_k \|F(x_k)\|^2-\sigma_{2}\rho^{-2}\alpha_k^{2}\|d_k\|^2.
	\end{equation}
	On the other hand, by the boundedness of $\{x_{k}\} \in \mathbb{B}$, there exists a constant $C>0$ such that
	$$
	\frac{1}{2}d_{k}^{\top}H(x_k)d_{k} \le C \|d_{k}\|^2.
	$$
	Therefore, we have from (\ref{eq: f second}) and (\ref{eq:Fd}) that
	\begin{equation}
		\begin{split}
			f(x_{k}(\rho^{-1}\alpha_{k})) - f(x_{k}) &=   \rho^{-1}\alpha_{k} F(x_k)^\top d_{k} + \frac{1}{2}\rho^{-2}\alpha_{k}^2 d_{k}^{\top}H(x_{k})d_{k} + o(\rho^{-2}\alpha_{k}^2\|d_{k}\|^2)\\
			&\le  -\rho^{-1}\alpha_{k} \|F(x_k)\|^2 + C\rho^{-2}\alpha_{k}^2\|d_{k}\|^2 + o(\alpha_{k}^2\|d_{k}\|^2).
		\end{split}
	\end{equation}
	Combining the last inequality with (\ref{not satf line search}), we get for all $k\in K$ sufficiently large,
	\begin{equation*}
		\|F(x_k)\|^2< \frac{\sigma_{2}+C}{(1-\sigma_{1})\rho}\alpha_k\|d_k\|^2+{o(\alpha_k\|d_k\|^2)}.
	\end{equation*}
	Since $\left\{d_k\right\}$ is bounded and $\lim _{k \in K, k \rightarrow \infty} \alpha_k=0$, the last inequality implies
	$$\lim _{k \in K, k \rightarrow \infty}\left\|F(x_k)\right\|=0.$$
	This also yields a contradiction.
	Hence (\ref{global convergence}) is true.
\end{proof}
The following theorem is directly obtained by the boundedness of $\{x_{k}\}$, Proposition \ref{prp: desc}, and \Cref{thr: g c}.
\begin{theorem}
	Let $\left\{x_k\right\}$ and $\left\{\lambda_k\right\}$ be generated by \Cref{alg1}.
	Then $\{\lambda_k\}$ monotonically converges to a ${\mathcal B}$-eigenvalue $\lambda^*$ of the symmetric tensor $\mathcal A$ and any accumulation point $x^{*}$ of $\{x_{k}\}$ is a ${\mathcal B}$-eigenvector associated to $\lambda^*$.
\end{theorem}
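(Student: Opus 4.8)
The plan is to separate the statement into its two assertions---convergence of the eigenvalue sequence and the eigenpair property of accumulation points---and to drive each of them with \Cref{thr: g c} and Proposition \ref{prp: desc}.

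First I would settle the monotone convergence of $\{\lambda_k\}$. Since $f(x_k)=\tfrac1m\mathcal{A}x_k^m=\tfrac1m\lambda_k$ and every iterate lies in the compact feasible set $\mathbb B$, on which the continuous function $\mathcal A x^m$ is bounded below, the sequence $\{\lambda_k\}$ is bounded from below. The line search (\ref{armijio}) together with (\ref{eq:Fd}) gives $f(x_{k+1})\le f(x_k)-\sigma_1\alpha_k\|F(x_k)\|^2-\sigma_2\alpha_k^2\|d_k\|^2<f(x_k)$, so $\{f(x_k)\}$, hence $\{\lambda_k\}$, is strictly decreasing. A bounded-below monotone sequence converges; denote its limit by $\lambda^*$, so $\lambda_k\downarrow\lambda^*$ and $f(x_k)\to\lambda^*/m$. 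Summing the same inequality also records $\sum_k\alpha_k\|F(x_k)\|^2<\infty$, which I will use below.

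Next I would exhibit $\lambda^*$ as a $\mathcal B$-eigenvalue. By \Cref{thr: g c}, $\liminf_k\|F(x_k)\|=0$, so there is a subsequence $\{x_{k_i}\}$ with $F(x_{k_i})\to 0$; since $\{x_{k_i}\}\subset\mathbb B$ is bounded and $\mathbb B$ is closed, a further subsequence converges to some $\hat x\in\mathbb B$. Continuity of $F$ yields $F(\hat x)=0$, so Proposition \ref{prp: desc} shows that $(\hat x,\mathcal A\hat x^m)$ is a $\mathcal B$-eigenpair, while continuity of $\mathcal A x^m$ with $\lambda_{k_i}\to\lambda^*$ forces $\mathcal A\hat x^m=\lambda^*$. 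Hence $\lambda^*$ is a $\mathcal B$-eigenvalue of $\mathcal A$. For an arbitrary accumulation point $x^*$ with $x_{k_j}\to x^*$, closedness of $\mathbb B$ gives $x^*\in\mathbb B$, and continuity together with $\lambda_{k_j}\to\lambda^*$ immediately yields $\mathcal A(x^*)^m=\lambda^*$, the correct eigenvalue label; by Proposition \ref{prp: desc} it then remains only to prove $F(x^*)=0$. I would argue by contradiction: if $\|F(x^*)\|>0$, then on a neighborhood of $x^*$ one has $\|F\|\ge\delta$ for some $\delta>0$, so $\|F(x_{k_j})\|\ge\delta$ for large $j$; combined with $\sum_k\alpha_k\|F(x_k)\|^2<\infty$ this gives $\alpha_{k_j}\to0$. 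Because $\alpha_{k_j}$ comes from backtracking, the trial step $\rho^{-1}\alpha_{k_j}$ violates (\ref{armijio}), and feeding this failure into the second-order expansion (\ref{eq: f second}) with $F(x_k)^\top d_k=-\|F(x_k)\|^2$---exactly as in the proof of \Cref{thr: g c}---produces $\|F(x_{k_j})\|^2\le \mathrm{const}\cdot\alpha_{k_j}\|d_{k_j}\|^2(1+o(1))$.

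The hard part is this last estimate. Since $\alpha_{k_j}\|d_{k_j}\|\to0$ always holds by \Cref{remark 1}, the bound contradicts $\|F(x_{k_j})\|\ge\delta$ \emph{only if} the directions $\{d_{k_j}\}$ are bounded; equivalently, the entire difficulty is to upgrade the $\liminf$ of \Cref{thr: g c} to $\lim_k\|F(x_k)\|=0$ along the relevant indices. My intended route is to localize \Cref{lemma: d bdd}: because $\|x_{k+1}-x_k\|\to0$, each fixed-length block of predecessors $x_{k_j-1},x_{k_j-2},\dots$ also converges to $x^*$, so on such a block $\|F\|\ge\delta$ and the telescoping estimate of \Cref{lemma: d bdd} controls $\|d_{k_j}\|$ through a contraction factor $r<1$. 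Making this bound uniform---that is, controlling the block length against the decay of the telescoping factor so that $\|d_{k_j}\|$ stays bounded even though the global lower bound $\|F(x_k)\|\ge\epsilon$ of \Cref{lemma: d bdd} is unavailable---is the delicate point on which the ``every accumulation point'' statement genuinely rests. If one is content with the weaker (and fully rigorous) conclusions that $\lambda^*$ is a $\mathcal B$-eigenvalue and that \emph{some} accumulation point is an associated eigenvector, the first three paragraphs already suffice.
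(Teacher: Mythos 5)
Your first three paragraphs are, in substance, exactly the paper's own proof, only spelled out: the paper disposes of this theorem in a single sentence, asserting that it is ``directly obtained'' from the boundedness of $\{x_k\}$, Proposition \ref{prp: desc}, and \Cref{thr: g c} --- precisely the three ingredients you combine (monotonicity and boundedness of $\{f(x_k)\}$ give $\lambda_k \downarrow \lambda^*$; the liminf in \Cref{thr: g c} plus compactness of $\mathbb{B}$ produce a critical accumulation point $\hat x$ with $\mathcal{A}\hat x^m = \lambda^*$; Proposition \ref{prp: desc} converts $F(\hat x)=0$ into the eigenpair property). So on the parts you complete, you follow the paper's intended route.

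The difficulty you isolate in your last paragraph is not a defect of your argument but a genuine imprecision in the paper's: the claim that \emph{any} accumulation point $x^*$ is a $\mathcal{B}$-eigenvector requires $F(x^*)=0$ along every convergent subsequence, and $\liminf_{k\to\infty} \|F(x_k)\| = 0$ alone does not deliver this --- it only guarantees that \emph{some} accumulation point is critical. (Continuity does give $\mathcal{A}(x^*)^m=\lambda^*$ for every accumulation point, as you note, but that is the eigenvalue label, not the eigenvector property.) Closing this would require upgrading \Cref{thr: g c} to $\lim_{k\to\infty}\|F(x_k)\|=0$, or an argument of the kind you sketch --- localizing \Cref{lemma: d bdd} near $x^*$ to bound $\{d_{k_j}\}$, which is delicate precisely because the lemma's hypothesis $\|F(x_k)\|\ge\epsilon$ for all $k$ is unavailable once the liminf is zero. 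The paper supplies no such argument; it passes silently from the liminf statement to the ``any accumulation point'' conclusion. Your proposal therefore establishes everything the paper's one-line proof actually establishes, and your honest flag on the remaining step is a correct critique of the paper rather than a gap you should be expected to fill.
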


\section{Numerical experiments}
\label{sec: numer}
In this section, numerical experiments are conducted to solve the ${\mathcal B}$-eigenvalue problem (\ref{pro:B-eigen}).
We pay particular attention to the Z-, H- and D-eigenvalue problems.
We compare the proposed FCG method with the GEAP method \citep{Kolda14Adaptive}, which is an easy to implement but promising algorithm.
Comparative results are provided to illustrate the performance of our method.
All numerical computations are conducted in MATLAB (R2022a) on a Huawei desktop with Intel(R) Core(TM) i7-10510U CPU at $1.80 \mathrm{GHz} - 2.30 \mathrm{GHz}$ and 16GB of memory running Windows 11.
Tensor toolbox for Matlab \citep{Kolda06} is employed to process tensor computation.

For all experiments, both the FCG and GEAP methods aim to return the maximum tensor eigenvalues.
In \Cref{alg1}, we set the parameters $\sigma_{1}=\sigma_{2}=10^{-4}$, $\rho=0.1$, and the initial step-length $\delta = \delta_{k}$ as (\ref{delta_k}) for all problems.
The parameter '$\tau$' in GEAP is always set to $\tau = 10^{-6}$.
All compared methods start from the same initial vector with entries randomly selected from $[-1,1]$ and normalized to the feasible set $\mathbb{B}$.
We test $1000$ random initial points for \Cref{ex1} and \Cref{ex6}, and use $100$ random initial points for other examples.
The stopping criterion is set to $\mathrm{Res}\leq 10^{-8}$, where
\begin{equation}\label{eq: Res}
	\mathrm{Res}:=
	\begin{cases}
		\left\|\mathcal{A}x^{m-1} - \lambda  \mathcal{B}x^{m-1} \right\|, & \text{if} \ |\lambda|\le 1\\
		\left\|\frac{\mathcal{A}x^{m-1}}{\lambda} - \mathcal{B}x^{m-1} \right\|, & \text{if} \ |\lambda|>1\\
	\end{cases} \quad \text{with} \quad \lambda = \mathcal{A}x^{m},
\end{equation}
or the number of iterations exceeds $500$.

The tested symmetric tensor ${\mathcal A}$ is given as follows.
\begin{example} \label{ex1}
	\citep{Kolda11SSHOPM} Let $\mathcal{A} \in \mathbb{R}^{[3,3]}$ be a symmetric odd order tensor defined by
	$$
	\begin{array}{lllll}
		a_{111}=-0.1281, & a_{112}=0.0516, & a_{113}=-0.0954, & a_{122}=-0.1958, & a_{123}=-0.1790, \\ a_{133}=-0.2676, & a_{222}=0.3251, & a_{223}=0.2513, &	a_{233}=0.1773, & a_{333}=0.0338.
	\end{array}
	$$
\end{example}
\begin{example} \label{ex6}
	\citep{Kolda11SSHOPM} Let $\mathcal{A} \in \mathbb{S}^{[4,3]}$ be the symmetric tensor defined by
	$$
	\begin{array}{llll}
		a_{1111}=0.2883, & a_{1112}=-0.0031, & a_{1113}=0.1973, & a_{1122}=-0.2485, \\
		a_{1123}=-0.2939, & a_{1133}=0.3847, & a_{1222}=0.2972, & a_{1223}=0.1862, \\
		a_{1233}=0.0919, & a_{1333}=-0.3619, & a_{2222}=0.1241, & a_{2223}=-0.3420, \\
		a_{2233}=0.2127, & a_{2333}=0.2727, & a_{3333}=-0.3054 . &
	\end{array}
	$$
\end{example}
\begin{example}\label{ex2}
	\citep{Hao15TR} Consider the $m$th-order $n$-dimensional symmetric tensor ${\mathcal A} \in \mathbb{S}^{[m,n]}$ with
	$$
	a_{i_1, i_2,\ldots, i_m}=\sin \left(i_1+\cdots+i_m\right), \quad \forall i_1,\ldots,i_m\in[n].
	$$
\end{example}

\begin{example}\label{ex3}
	\citep{Kolda14Adaptive}
	A random symmetric tensor ${\mathcal A} \in \mathbb{S}^{[m,n]}$ is generated as follows: we first randomly select entries from $[-1, 1]$, and then symmetrize the resulting tensor to obtain ${\mathcal A}$.
\end{example}

\begin{example}\label{ex4}
	\citep{Nie14Semidef} Consider the $m$th-order $n$-dimensional symmetric tensor ${\mathcal A} \in \mathbb{S}^{[m,n]}$ with
	$$
	a_{i_1, \ldots, i_m}=\arctan \left((-1)^{i_1} \frac{i_1}{n}\right)+\cdots+\arctan \left((-1)^{i_m} \frac{i_m}{n}\right),\quad \forall i_1,\ldots,i_m\in[n].
	$$
\end{example}

\begin{example}\label{ex5}
	The elements of the symmetric tensor ${\mathcal A}\in\mathbb{S}^{[m,n]}$ are given by
	$$
	a_{i_i\ldots i_m}=v_{i_1}+\cdots+v_{i_m},\quad \forall i_1,\ldots,i_m\in[n],
	$$
	where $v \in [-1, 1]^{n}$ is a random vector with uniform distribution.
\end{example}

\begin{center}
	\begin{table}[b]
		\caption{Numerical results of \Cref{ex1} for calculating Z-eigenpairs} \label{tb:small scale}
		\def\temptablewidth{1\textwidth}
		\begin{tabular*}{\temptablewidth}{@{\extracolsep{\fill}}c|ccc|ccc}\toprule
			\multirow{2}{.15in}{\textsf{$\lambda_{Z}^*$}}  & \multicolumn{3}{c|}{GEAP}  &  \multicolumn{3}{c}{FCG} \\
			& \textsf{occ}  & \textsf{iter} & \textsf{time}  &  \textsf{occ}  & \textsf{iter} & \textsf{time} \\
			\midrule
			-0.0006  &  152 &    17  & 0.00079  &  109 &   8.4  & 0.00048   \\
			0.0180  &  168 &    41  & 0.00198  &  129 &     9  & 0.00056   \\
			0.4306  &  311 &    23  & 0.00113  &  322 &   8.5  & 0.00052   \\
			0.8730  &  369 &    13  & 0.00061  &  440 &   8.3  & 0.00049   \\
			\botrule
		\end{tabular*}
	\end{table}
\end{center}

\begin{center}
	\begin{table}[b]
		\caption{Numerical results of \Cref{ex2} and \Cref{ex3} for calculating Z-eigenpairs} \label{tb2}
		\def\temptablewidth{1\textwidth}
		\begin{tabular*}{\temptablewidth}{@{\extracolsep{\fill}}cc|ccccc|ccccc}\toprule
			
			\multirow{2}{.15in}{m}  & \multirow{2}{.1in}{n}
			& \multicolumn{5}{c|}{GEAP}  &  \multicolumn{5}{c}{FCG} \\
			
			&  & \textsf{largest $\lambda_{Z}^*$}& \textsf{occ}  & \textsf{iter} & \textsf{time} & \textsf{suc}
			& \textsf{largest $\lambda_{Z}^*$}& \textsf{occ}  & \textsf{iter} & \textsf{time} & \textsf{suc}\\
			\midrule
			\multicolumn{12}{c}{\Cref{ex2}}\\
			\midrule
			\multirow{3}{*}{4}
			& 20  & 101.8 &  47 &  71.6  & 0.00982 & 100  & 101.8 &  40 &    13  & 0.00183 & 100 \\
			& 50  & 632.8 &  56 &  72.5  & 0.15687 & 100  & 632.8 &  46 &  14.8  & 0.03553 & 100 \\
			& 80  & 1604 &  51 &  73.9  & 1.35663 & 100  & 1604 &  47 &  15.7  & 0.32963 & 100 \\
			\midrule
			\multirow{3}{*}{5}
			& 10  & 64.85 &  13 &  89.9  & 0.00839 & 100  & 64.85 &  12 &  11.9  & 0.00124 & 100 \\
			& 20  & 359.1 &  21 &  91.5  & 0.13536 & 100  & 359.1 &  24 &  14.2  & 0.02457 & 100 \\
			& 30  & 951.0 &  26 &  91.3  & 1.12273 & 100  & 951.0 &  22 &  14.7  & 0.20897 & 100 \\
			
			\midrule
			\multicolumn{12}{c}{\Cref{ex3}}\\
			\midrule
			\multirow{4}{*}{3}
			& 50  &  6.813 &   7 &   363  & 0.05633 &  37  &  6.813 &  17 &  64.7  & 0.00514 & 100 \\
			&100  &  9.365 &   1 &   447  & 0.26977 &  16  &  9.365 &   5 &  84.1  & 0.02293 & 100 \\
			&150  &  - &   - &     -  & - &   0  & 11.503 &   1 &  99.3  & 0.10218 & 100 \\
			&200  &  - &   - &     -  & - &   0  & 13.343 &   2 &   113  & 0.28354 & 100 \\
			\midrule
			\multirow{3}{*}{4}
			& 20  &  3.958 &   8 &   278  & 0.03050 &  85  &  3.958 &   7 &  45.5  & 0.00438 & 100 \\
			& 50  &  7.252 &   2 &   363  & 0.70717 &  44  &  7.252 &   4 &  65.6  & 0.12526 & 100 \\
			& 80  &  8.742 &   1 &   427  & 6.83964 &  11  &  8.962 &   1 &  88.7  & 1.43900 & 100 \\
			\botrule
		\end{tabular*}
	\end{table}
\end{center}

\begin{center}
	\begin{table}[b]
		\caption{Numerical results of \Cref{ex4} and \Cref{ex5}  with large sizes for calculating Z-eigenpairs} \label{tb3}
		\def\temptablewidth{1\textwidth}
		\begin{tabular*}{\temptablewidth}{@{\extracolsep{\fill}}cc|ccccc|ccccc}\toprule
			\multirow{2}{.15in}{m}   & \multirow{2}{.1in}{n}
			& \multicolumn{5}{c|}{GEAP}  &  \multicolumn{5}{c}{FCG} \\
			&  & \textsf{largest $\lambda_{Z}^*$}& \textsf{occ}  & \textsf{iter} & \textsf{time} & \textsf{suc}
			& \textsf{largest $\lambda_{Z}^*$}& \textsf{occ}  & \textsf{iter} & \textsf{time} & \textsf{suc}\\
			\midrule
			\multicolumn{12}{c}{\Cref{ex4}}\\
			\midrule\multirow{4}{*}{3}
			&$10^5$ & 1.275e+05 &  40 &  26.2  & 0.00612 & 100 & 1.275e+05 &  40 &  15.9  & 0.00382 & 100 \\
			&$10^6$ & 1.273e+06 &  42 &  26.6  & 0.19658 & 100 & 1.273e+06 &  42 &  17.7  & 0.13694 & 100 \\
			&$10^7$ & 1.273e+07 &  31 &  28.1  & 2.21683 & 100 & 1.273e+07 &  31 &  20.1  & 1.65794 & 100 \\
			
			\midrule\multirow{3}{*}{4}
			&$10^4$ & 1.42e+06 &  68 &  27.7  & 0.00202 & 100 & 1.42e+06 &  68 &    21  & 0.00131 & 100 \\
			&$10^5$ & 4.515e+07 &  59 &  26.7  & 0.00825 & 100 & 4.515e+07 &  59 &  22.6  & 0.00720 & 100 \\
			&$10^6$& 1.43e+09 &  65 &  29.1  & 0.20815 & 100 & 1.43e+09 &  65 &  25.9  & 0.19334 & 100 \\
			\midrule
			
			\multicolumn{12}{c}{\Cref{ex5}}\\
			\midrule \multirow{3}{*}{5}
			& $10^3$ & 2.714e+07 &  22 &    26  & 0.00017 & 100 & 2.714e+07 &  20 &    17  & 0.00008 & 100 \\
			&$10^4$& 8.32e+09 &  35 &  28.6  & 0.00179 & 100 & 8.32e+09 &  30 &  20.8  & 0.00110 & 100 \\
			&$10^5$ & 2.628e+12 &  35 &  29.4  & 0.00630 & 100 & 2.628e+12 &  32 &  25.6  & 0.00569 & 100 \\
			\midrule \multirow{3}{*}{6}
			&$10^2$ & 1.112e+06 &  57 &  22.5  & 0.00011 & 100 & 1.112e+06 &  53 &  16.1  & 0.00005 & 100 \\
			&$10^3$& 8.473e+08 &  61 &  27.9  & 0.00021 & 100 & 8.473e+08 &  57 &  20.9  & 0.00011 & 100 \\
			&$10^4$& 8.943e+11 &  64 &  28.5  & 0.00179 & 100 & 8.943e+11 &  56 &  23.3  & 0.00127 & 100 \\       
			\botrule
		\end{tabular*}
	\end{table}
\end{center}
\begin{figure}[b]
	\center{
		{\includegraphics[width=0.4\textwidth]{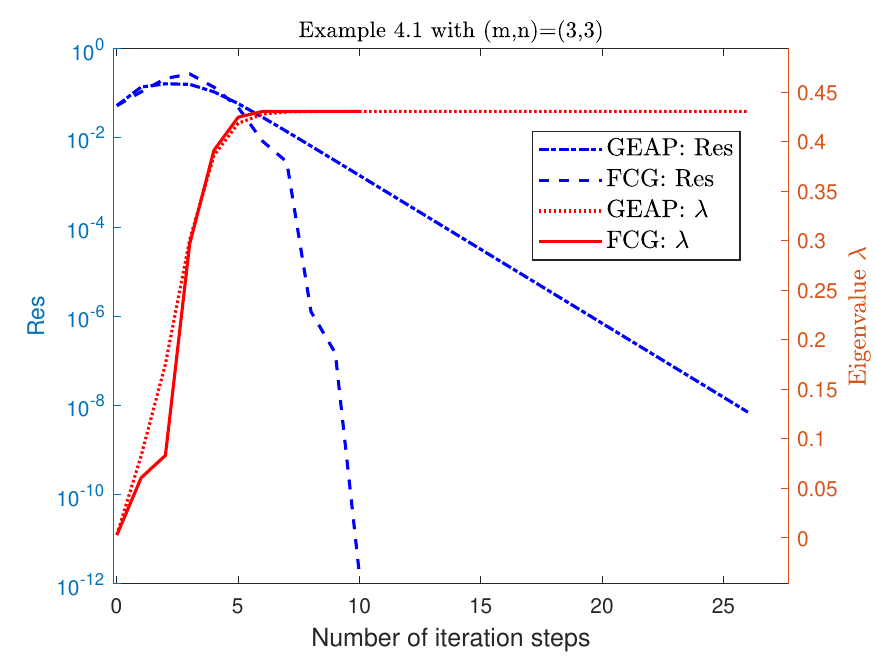}}
		{\includegraphics[width=0.4\textwidth]{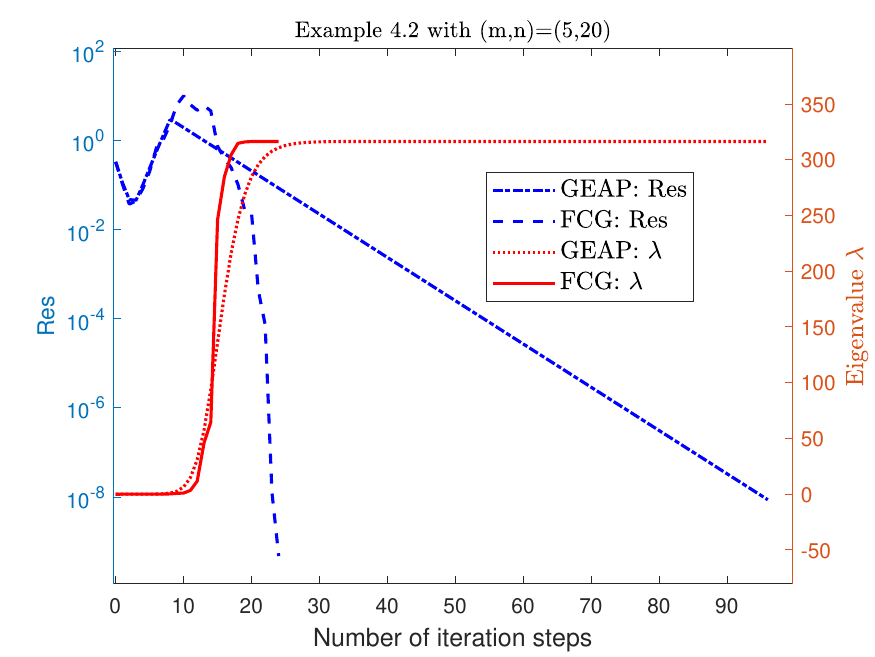}}
	}
	\caption{Evolutions of residue and Z-eigenvalue concerning the number of iteration steps}
	\label{fig1}
\end{figure}

\subsection{Numerical results for computing Z-eigenpairs}
In this subsection, we aim to find Z-eigenpairs of symmetric tensors, i.e.,  $m^{\prime} = 2$ and $\mathcal{B}x^{2} = x^{\top}x$.

We first test the small scale tensor in \Cref{ex1}.
The average numerical results are summarized in \Cref{tb:small scale}, where '$\lambda_{Z}^*$' is the Z-eigenvalue output by each algorithm,
'occ' represents the occurrence number of the corresponding eigenvalue,
'iter' stands for the average iteration number and 'time' means the average CPU time in seconds.

It can be seen from  \Cref{tb:small scale} that FCG and GEAP can always find a Z-eigenvalue at each trial.
The proposed FCG needs fewer iterations and CPU time than GEAP to find a Z-eigenvalue.
We further observe from the Table 3.2 in the literature \citep{Kolda11SSHOPM} that each output Z-eigenvalue in \Cref{tb:small scale} is negative stable, i.e., a local maximum of the problem (\ref{opt:B-eig}).

Now, we consider the symmetric tensors with different orders and dimensions in \Cref{ex2} and \Cref{ex3}.
The average numerical results are reported in \Cref{tb2}, where the column
'largest $\lambda_{Z}^*$' means the largest Z-eigenvalue output by the corresponding algorithms,
and 'suc' refers to the number of successful terminations.

The results of \Cref{ex3} in \Cref{tb2} indicate that both GEAP and FCG found the largest Z-eigenvalue with a low probability.
One possible reason is that the random tensor generated in \Cref{ex3} has many distinct Z-eigenvalues.
Hence, both GEAP and FCG converge easily to a local maximum of the problem (\ref{opt:B-eig}).
It also reveals that as the dimension increases, both GEAP and FCG methods need more iterations and CPU time to terminate.
The success rate of GEAP for finding Z-eigenvalues decreases as the dimension $n$ of the problem increases,
while the FCG method always terminated successfully. 
Moreover, in all successful cases, the FCG method used less CPU time than GEAP.

We then compared the two methods on  \Cref{ex4} and \Cref{ex5} with large sizes. The results are given in \Cref{tb3}.
Besides, to observe the convergence behavior in terms of the number of iterations, \Cref{fig1} presents the visual relations among the residual (\ref{eq: Res}) and eigenvalue estimates with iterations. We see from the figure that  the proposed FCG method
performed much faster than GEAP did, especially in the local region of a Z-eigenpair.

\begin{center}
	\begin{table}[b]
		\caption{Numerical results of \Cref{ex6} for calculating H-eigenpairs} \label{tb sm H}
		\def\temptablewidth{1\textwidth}
		\begin{tabular*}{\temptablewidth}{@{\extracolsep{\fill}}c|ccc|ccc}\toprule
			\multirow{2}{.3in}{\textsf{$\lambda_{H}^*$}}  & \multicolumn{3}{c|}{GEAP}  &  \multicolumn{3}{c}{FCG} \\
			& \textsf{occ}  & \textsf{iter} & \textsf{time}  &  \textsf{occ}  & \textsf{iter} & \textsf{time} \\
			\midrule
			0.8944  &  211 &    38  & 0.00188  &  198 &    10  & 0.00055   \\
			1.9316  &  334 &    42  & 0.00210  &  330 &    11  & 0.00061   \\
			2.3129  &  455 &    46  & 0.00232  &  472 &    11  & 0.00062   \\
			\botrule
		\end{tabular*}
	\end{table}
\end{center}

\begin{center}
	\begin{table}[b]
		\caption{Numerical results of \Cref{ex3} for calculating H-eigenpairs} \label{tb7}
		\def\temptablewidth{1\textwidth}
		\begin{tabular*}{\temptablewidth}{@{\extracolsep{\fill}}cc|ccccc|ccccc}\toprule
			\multirow{2}{.15in}{m}   & \multirow{2}{.1in}{n}
			& \multicolumn{5}{c|}{GEAP}  &  \multicolumn{5}{c}{FCG} \\
			&  & \textsf{largest $\lambda_{H}^*$}& \textsf{occ}  & \textsf{iter} & \textsf{time} & \textsf{suc}
			& \textsf{largest $\lambda_{H}^*$}& \textsf{occ}  & \textsf{iter} & \textsf{time} & \textsf{suc}\\
			\midrule \multirow{3}{*}{4}
			&20 & 54.55 &   2 &   221  & 0.02235 &  91 & 54.55 &   6 &  44.2  & 0.00393 & 100 \\
			&50 & 245.9 &   1 &   291  & 0.61043 &  84 & 249.5 &   1 &    63  & 0.13862 & 100 \\
			&80 & 495.1 &   1 &   330  & 5.38138 &  55 & 494.7 &   1 &  76.2  & 1.38651 & 100 \\
			
			\midrule 
			\multirow{2}{*}{6}
			&10 & 167 &   5 &   168  & 0.05523 &  94 & 167 &   6 &  37.3  & 0.01269 & 100 \\
			&15 & 497.4 &   1 &   180  & 1.09749 &  89 & 497.4 &   1 &  42.9  & 0.28657 & 100 \\
			
			\midrule \multirow{2}{*}{8}
			&5 & 136.5 &  10 &   132  & 0.03041 & 100 & 136.5 &  13 &  26.5  & 0.00611 & 100 \\ 
			&8 & 839.3 &   8 &   194  & 2.91084 &  95 & 839.3 &  10 &  37.9  & 0.61933 & 100 \\ 
			\botrule
		\end{tabular*}
	\end{table}
\end{center}

\subsection{Numerical results for computing H-eigenpairs, D-eigenpairs and general \texorpdfstring{${\mathcal B}$}--eigenpairs }
In this subsection, we first test the performance of FCG for computing H-eigenpairs, i.e., $m^{\prime} = m$ is 
even and $\mathcal{B}x^{m} = \sum_{i=1}^{n}x_{i}^{m}$.
The symmetric tensor ${\mathcal A}$ is generated by \Cref{ex6} and \Cref{ex3}.
The results are listed in Tables \ref{tb sm H} and \ref{tb7}, where $\lambda_{H}^*$ represents the H-eigenvalue 
and the meaning of other columns are the	same as those in Tables \ref{tb:small scale} and \ref{tb2}.

We then compared the methods FCG and GEAP in solving $D$-eigenvalue problems.
In the tested problems,  the symmetric tensor ${\mathcal A}$ was set to the same as that in \Cref{ex3}.
The positive definite symmetric tensor ${\mathcal B}$ was given as follows.
\begin{example} \label{ex7}
	Consider the positive definite symmetric matrix $D\in \mathbb{R}^{n\times n}$ in the form
	$$D=\rho I + CC^{\top},$$
	where $\rho= 0.1$ and $C\in \mathbb{R}^{n\times (n-1)}$ is a matrix whose elements are generated randomly from $[-1, 1]$.
	Let tensor $\mathcal{B} \in \mathbb{S_{+}}^{[m^{\prime},n]}$ with even order $m^{\prime}$ such that ${\mathcal B}x^{m^{\prime}} = \left(x^{\top}Dx\right)^{m^{\prime}/2}$.
	Such ${\mathcal B}$-eigenvalues are D-eigenvalues \citep{QI08Deigen, CHANG09Eigen}.
\end{example}

Tables \ref{tb D even} and \ref{tb D odd} list the performance of both methods, corresponding to the cases where $m$ is even and odd, respectively,
where $\lambda_{D}^*$ represents the D-eigenvalue, and 'iter-in' means the number of inner iterations.
We can observe from \Cref{tb7} and \Cref{tb D even} that the performance of FCG method in calculating the $D$-eigenvalues and 
$H$-eigenvalues of even order tensors is very similar.
The results in \Cref{tb D odd} show that the proposed FCG method also performed quite well in computing $D$-eigenvalues of odd order symmetric tensors.

\begin{center}
	\begin{table}[t]
		\caption{Numerical results for calculating D-eigenpairs of even order symmetric tensor} \label{tb D even}
		\def\temptablewidth{1\textwidth}
		\begin{tabular*}{\temptablewidth}{@{\extracolsep{\fill}}cc|ccccc|ccccc}\toprule
			\multirow{2}{.4in}{($m,m^{\prime}$)} & \multirow{2}{.1in}{n}
			& \multicolumn{5}{c|}{GEAP}  &  \multicolumn{5}{c}{FCG} \\
			&  & \textsf{largest $\lambda_{D}^*$}& \textsf{occ}  & \textsf{iter} & \textsf{time} & \textsf{suc}
			& \textsf{largest $\lambda_{D}^*$}& \textsf{occ}  & \textsf{iter} & \textsf{time} & \textsf{suc}\\
			
			\midrule \multirow{2}{*}{(6,6)}
			&10
			& 0.3072 &  43 &   138  & 0.04743 & 100
			& 0.3072 &  42 &  36.9  & 0.01213 & 100 \\
			&15
			& 0.00717 &  23 &   181  & 1.04812 &  82
			& 0.00717 &  30 &  42.1  & 0.27046 & 100 \\
			\multirow{2}{*}{(8,8)}
			&5
			& 1.745 &  63 &  69.2  & 0.01253 & 100
			& 1.745 &  65 &  26.6  & 0.00476 & 100 \\ 
			&8
			& 0.01916 &  11 &   153  & 2.28275 & 100
			& 0.01916 &   6 &  35.4  & 0.58249 & 100 \\ 
			\botrule
		\end{tabular*}
	\end{table}
\end{center}

\begin{center}
	\begin{table}[t]
		\caption{Numerical results of FCG for calculating D-eigenpairs of odd order symmetric tensor} \label{tb D odd}
		\def\temptablewidth{1\textwidth}
		\begin{tabular*}{\temptablewidth}{@{\extracolsep{\fill}}ccccccccc}\toprule
			($m,m^{\prime}$) & $n$ & \textsf{largest $\lambda_{D}^*$}& \textsf{occ}  & \textsf{iter} & \textsf{iter-in} & \textsf{time} & \textsf{Res} & \textsf{suc}\\
			
			\midrule 
			\multirow{2}{*}{(5,2)}
			&15
			& 0.03288 &  19 &  42.1 &     3  & 0.00923 & 6.5e-09 & 100 \\
			&20
			& 0.0172 &  24 &  47.1 &     4  & 0.07145 & 7.1e-09 & 100 \\
			
			\multirow{2}{*}{(7,2)}
			&5
			& 18.57 &  51 &  31.4 &     1  & 0.00361 & 5.5e-09 & 100 \\
			&10
			& 0.0837 &  11 &  39.4 &     3  & 0.24939 & 6.5e-09 & 100 \\
			\botrule
		\end{tabular*}
	\end{table}
\end{center}

\begin{center}
	\begin{table}[b]
		\caption{Numerical results of FCG for calculating general ${\mathcal B}$-eigenpairs} \label{tb B}
		\def\temptablewidth{1\textwidth}
		\begin{tabular*}{\temptablewidth}{@{\extracolsep{\fill}}ccccccccc}\toprule
			($m,m^{\prime}$) & $n$ & \textsf{largest $\lambda_{{\mathcal B}}^*$}& \textsf{occ}  & \textsf{iter} & \textsf{iter-in} & \textsf{time} & \textsf{Res} & \textsf{suc}\\
			\midrule
			\multirow{2}{*}{(5,4)}
			&10 & 0.0145 &   7 &  31.3 &     2  & 0.00758 & 6.3e-09 & 100 \\
			&20 & 0.0037 &   2 &  41.9 &     4  & 0.08093 & 6.9e-09 & 100 \\
			\multirow{2}{*}{(6,4)}
			&10 & 0.00537 &   4 &  31.1 &     2  & 0.01562 & 6.1e-09 & 100 \\
			&15 & 0.00227 &   3 &    34 &     2  & 0.23490 & 6.4e-09 & 100 \\
			\multirow{2}{*}{(6,6)}
			&10 & 0.00364 &   1 &  35.4 &     2  & 0.02891 & 6.3e-09 & 100 \\
			&15 & 0.00138 &   3 &  40.8 &     2  & 0.55517 & 6.4e-09 & 100 \\
			
			\botrule
		\end{tabular*}
	\end{table}
\end{center}

Finally, we test a randomly generated ${\mathcal B}$-eigenvalue problem.
The symmetric tensor ${\mathcal A}$ is randomly generated in \Cref{ex3}.
The symmetric tensor ${\mathcal B}$ is constructed as an even order diagonally dominant tensor in \Cref{ex8} below, 
which is typically a  positive definite symmetric tensor.	

\begin{example} \label{ex8}
	Let $\mathcal{B} \in \mathbb{S}^{[m^{\prime},n]}$ be a diagonally dominant tensor in the form $\mathcal{B}=s \mathcal{I}+\mathcal{C}$, where ${\mathcal C}\in \mathbb{S}^{[m^{\prime},n]}$ is a tensor whose diagonal entries are zeros and the others generated uniformly from $[-1,1]$, and
	$$
	s=(1+0.01) \cdot \max _{i=1,2, \ldots, n}\left(|\mathcal{C}| \mathbf{e}^{m-1}\right)_i.
	$$
	Here $|\mathcal{C}|$ is the element-wise absolute tensor of ${\mathcal C}$ and $\mathbf{e}=(1,1, \ldots, 1)^{\top} \in \mathbb{R}^{n}$.
\end{example}

The average results are presented in \Cref{tb B}, where $\lambda_{B}^*$ represents the ${\mathcal B}$-eigenvalue.
From \Cref{tb D odd} and \Cref{tb B}, we can see that the number of inner iterations used by the FCG method is very few.
This phenomenon appears very often in all the experiments though we did not list all the inner iterations in other tables.
This reveals that the initial steplength estimate in \Cref{rm initial step lenth} is useful in practice.

\section{Conclusion}
\label{sec: conc}
In this paper, we presented a feasible descent framework for a constrained optimization problem that is equivalent to the ${\mathcal B}$-eigenvalue problem.
Under this framework, we proposed the FCG method by determining the feasible descent direction with the idea of the modified PRP method.
A simple but useful way to set the initial steplength for the FCG method was provided.
The global convergence of the FCG method with an Armijo-type curve search was discussed.
Numerical examples have shown that the method performed quite well in solving different tensor eigenvalue problems.


\section*{Funding}
National Natural Science Foundation of China (grant number 12271187) and the Yunnan Natural Science Foundation (No. 202101BA070001-047).


\end{document}